\spnewtheorem{algorithm}{Algorithm}{\bf}{\rm}
\begin{document}

\title{Discovering a junction tree behind a Markov network by a greedy algorithm
}

\titlerunning{Hypergraphs and dependence structures}        

\author{Tam\'{a}s Sz\'{a}ntai         \and
        Edith Kov\'{a}cs 
}

\authorrunning{T. Sz\'{a}ntai and E. Kov\'{a}cs} 

\institute{Institute of Mathematics, Budapest University of Technology and Economics,
            \at M\H{u}egyetem rkp. 3, H-1111 Budapest, Hungary \\
              Tel.: +36-1-463-1298\\
              Fax: +36-1-463-1291\\
              \email{szantai@math.bme.hu}           
           \and
           Department of Mathematics, \'{A}VF College of Management of Budapest \at
              Vill\'{a}nyi \'{u}t 11-13, H-1114 Budapest, Hungary
}

\date{Received: date / Accepted: date}

\maketitle

\begin{abstract}
In our paper \cite{SzaKov08} we introduced a special kind of $k$-width junction tree,
called $k$-th order $t$-cherry junction tree in order to approximate a joint
probability distribution. The approximation is the best if the
Kullback-Leibler divergence between the true joint probability distribution
and the approximating one is minimal. Finding the best approximating $k$-width
junction tree is NP-complete if $k>2$ (see in \cite{KarSre01}). In \cite{SzaKov10} we
also proved that the best approximating $k$-width junction tree can be embedded
into a $k$-th order $t$-cherry junction tree. We introduce a greedy
algorithm resulting very good approximations in reasonable computing time.

In this paper we prove that if the Markov network underlying fullfills some
requirements then our greedy algorithm is able to find the true probability
distribution or its best approximation in the family of the $k$-th order 
$t$-cherry tree probability distributions. 
Our algorithm uses just the $k$-th order marginal probability
distributions as input.

We compare the results of the greedy algorithm proposed in this paper
with the greedy algorithm proposed by Malvestuto \cite{Mal91}.
\keywords{Greedy algorithm \and Conditional independence \and Markov network 
\and Triangulated graph \and Graphical models \and $t$-cherry junction tree
\and Contingency table}
\subclass{90C35 \and 90C59 \and 62B10 \and 94A17 \and 62H05 \and 62H30}
\end{abstract}

\section{Introduction}
\label{intro}
The problem of approximating multivariate probability distributions is a
central task of many fields. Unfortunately in most of the cases we know
nothing about the theoretical probability distribution. It is useful to
exploit the dependence structure between the random variables involved. The
problem is: what should we do when correlation matrices can not be used.

Starting from a discrete probability distribution, for example from a sample
data, it is useful to discover some of the conditional independences
between the variables.

The Markov networks (Markov random fields) and Bayesian networks encode
these conditional independences. In our paper we focus on the Markov
networks. If the graph structure of the Markov network is known, many
procedures were developed for its inference, see \cite{Pea88} and \cite{CowDawLauSpi03}. There are
many cases where the graph structure of the Markov network is unknown. 
In \cite{SzaKov08} we proposed a method for discovering some of the
conditional independences between the random variables by fitting a special 
type of multivariate probability distribution called
$t$-cherry junction tree distribution to the 
sample data. The goodness of fit was quantified by
the Kullback-Leibler divergence (see \cite{Kul59}). This relates the problem to information
theory (\cite{CoTo91}). On the other side, the graph underlying the Markov network links
the problem to graph theory. For elements of graph theory see \cite{Berge73}.

In the second section we introduce some concepts used in graph theory and
probability theory that we need throughout the paper and present how these
can be linked to each other. For a good overview see \cite{LauSpi88}.

In the third part we introduce the Sz\'{a}ntai-Kov\'{a}cs's greedy algorithm
which starting from the $k$-th order marginal probability distributions gives
a $k$-th order $t$-cherry junction tree probability distribution as a result.
For the same task Malvestuto gives another algorithm in \cite{Mal91}. First we compare
these two algorithms from analytical point of view and then apply them on the
example problem presented in Malvestuto's paper \cite{Mal91}.

In the fourth part we introduce the so called puzzle algorithm for $k$-th order $t$-cherry trees. 
This results in a puzzle numbering of the verticies. Using this we give some theoretical
results related to our greedy algorithm.

The last part contains conclusions and some possible applications of our
greedy algorithm.

\section{Preliminaries}
\label{sec:2}
This part contains a summary of the concepts used throughout the paper. We first
present the acyclic hypergraphs and junction trees. We then present a short
reminder on Markov network. We finish this part with the multivariate joint
probability distribution associated to a junction tree.

Let $V=\left\{ 1,\ldots ,d\right\} $ be a set of vertices and $\Gamma $ a
set of subsets of $V$ called {\it set of hyperedges}. A \textit{hypergraph}
consists of a set $V$ of vertices and a set $\Gamma $ of hyperedges. We
denote a hyperedge by $C_i$, where $C_i$ is a subset of $V$. If two vertices
are in the same hyperedge they are connected, which means, the hyperedge of a
hyperhraph is a complete graph on the set of vertices contained in it.

A vertex is called \textit{simplicial} if it belongs to precisely one
hyperedge.

An ordering of the vertices is a \textit{perfect elimination ordering} if 
$ \forall i, 1\leq i\leq d$ the vertex $i$ is simplicial in the subhypergraph defined on
the vertices $\left\{ i,\ i+1,\ldots ,d\right\} .$

The \textit{acyclic}{\normalsize \ }\textit{hypergraph} is a special type of
hypergraph which fulfills the following requirements:

\begin{itemize}
\item  Neither of the edges of $\Gamma $ is a subset of another edge.

\item  There exists a numbering of edges for which the \textit{running
intersection property} is fullfiled: $\forall j\geq 2\quad \ \exists \ i<j:\
C_i\supset C_j\cap \left( C_1\cup \ldots \cup C_{j-1}\right) $. (Other
formulation is that for all hyperedges $C_i$ and $C_j$ with $i<j-1$,
$C_i \cap C_j \subset C_s \ \mbox{for all} \ s, i<s<j$.)
\end{itemize}

Let $S_j=C_j\cap \left( C_1\cup \ldots \cup C_{j-1}\right) $, for $j>1$ and $%
S_1=\phi $. Let $R_j=C_j\backslash S_j$. We say that $S_j$\textit{separates} 
$R_j$ from $\left( C_1\cup \ldots \cup C_{j-1}\right) \backslash S_j$, and
call $S_j$ separator set or shortly separator.

Now we link these concepts to the terminology of junction trees.

The junction tree is a special tree stucture which is equivalent to the
connected acyclic hypergraphs \cite{LauSpi88}. The nodes of the tree correspond
to the hyperedges of the connected acyclic hypergraph and are called clusters, the edges of the tree
correspond to the separator sets and called separators. The set of all
clusters is denoted by $\mathcal{C}$, the set of all separators is denoted by %
$\mathcal{S}$. The junction tree with the largest cluster containing $k$
variables is called \textit{k-width junction tree}.

An important relation between graphs and hypergraphs is given in \cite{LauSpi88}: A
hypergraph is acyclic if and only if it can be considered to be the set of
cliques of a triangulated graph (a graph is triangulated if every cycle of
legth greater than 4 has a chord).

\begin{theorem}
\label{theo:1}
(Fulkerson and Gross, \cite{FulGro65}): A graph is an acyclic hypergraph (triangulated graph
or junction tree) if and only if has an perfect elimination ordering.
\end{theorem}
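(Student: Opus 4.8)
The plan is to establish both implications of the equivalence ``$G$ is triangulated if and only if $G$ admits a perfect elimination ordering,'' since the identifications among acyclic hypergraph, triangulated graph and junction tree are already in place. Throughout I work with the graph-theoretic reading of simpliciality: a vertex is simplicial exactly when its neighbourhood induces a complete subgraph.

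For the easy direction, suppose $v_1,\ldots,v_d$ is a perfect elimination ordering and let $C$ be any cycle of length at least $4$. Let $v_i$ be the vertex of $C$ of smallest index. Its two neighbours on $C$, say $u$ and $w$, both carry larger indices and are both adjacent to $v_i$; since $v_i$ is simplicial in the subgraph induced on $\left\{ v_i,\ldots,v_d\right\}$, its later neighbours form a clique, so $u$ and $w$ are adjacent. As $u$ and $w$ are non-consecutive on $C$, this edge is a chord. Hence every long cycle has a chord and $G$ is triangulated.

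For the converse I would argue by induction on $\left| V\right|$, the inductive step resting on Dirac's lemma: every triangulated graph on at least two vertices that is not complete contains two non-adjacent simplicial vertices. Granting this, I pick a simplicial vertex $v$, set it as the first vertex of the ordering, and delete it; the graph $G-v$ is an induced subgraph of a triangulated graph and is therefore triangulated, so by induction it has a perfect elimination ordering. Prepending $v$ yields an ordering of $G$ in which $v$ is simplicial among all later vertices and each later vertex is simplicial among those after it, that is, a perfect elimination ordering.

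The main obstacle is Dirac's lemma, and within it the auxiliary fact that a \emph{minimal} vertex separator $S$ of a triangulated graph induces a clique. I would prove this fact by taking two vertices $x,y\in S$; by minimality each has a neighbour in both components $A$ and $B$ of $G-S$, so one can splice a shortest $x$--$y$ path through $A$ with a shortest $x$--$y$ path through $B$ into a cycle, and if $x$ and $y$ were non-adjacent this cycle would have length at least $4$ yet admit no chord (no $A$--$B$ edges exist, and the two paths are chordless), contradicting the triangulatedness of $G$; hence $xy$ is an edge. With $S$ a clique, the lemma follows: choosing $S$ to separate a fixed non-adjacent pair $a,b$, the induced subgraphs $G[A\cup S]$ and $G[B\cup S]$ are triangulated and smaller, so by induction each is complete or has two non-adjacent simplicial vertices; since $S$ is a clique, at most one of any two non-adjacent simplicial vertices lies in $S$, forcing a simplicial vertex of $G[A\cup S]$ to lie in $A$ (and likewise one of $G[B\cup S]$ to lie in $B$). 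A vertex of $A$ has all its neighbours inside $A\cup S$, so it remains simplicial in $G$, and the two vertices found in $A$ and $B$ are non-adjacent, which completes the proof of the lemma and hence of the theorem.
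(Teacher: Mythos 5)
The paper states this theorem as a known classical result, citing Fulkerson and Gross, and supplies no proof of its own, so there is no internal argument to compare yours against; what can be judged is correctness, and your proposal is the standard, correct proof of the chordal-graph formulation. The easy direction (least-indexed vertex of a long cycle forces a chord between its two cycle-neighbours) is sound, and the converse correctly reduces to Dirac's lemma via induction, with the lemma in turn resting on the fact that a minimal vertex separator $S$ of a triangulated graph is a clique. The delicate points all check out: by minimality every vertex of $S$ has a neighbour in each of the two components $A$ and $B$, so the spliced cycle has length at least $4$ (each path contains an internal vertex); the only conceivable chord of that cycle is the edge $xy$ itself, since $A$--$B$ edges are absent and chords involving the shortest paths would shorten them; and in the lemma's induction the clique property of $S$ forces a simplicial vertex of $G\left[A\cup S\right]$ into $A$, which remains simplicial in $G$ because its entire neighbourhood lies in $A\cup S$. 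You are also right to dispose of the hypergraph and junction-tree readings up front: the paper itself assumes (quoting Lauritzen--Spiegelhalter) the identification of acyclic hypergraphs with clique hypergraphs of triangulated graphs, so proving the triangulated-graph equivalence is exactly what the statement requires. One trivial bookkeeping remark: in the main induction you should note explicitly that a simplicial vertex exists even when $G$ is complete (every vertex is), so that the ordering can always be started; your phrasing leaves this implicit but the fix is immediate.
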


\begin{algorithm}
\label{alg:1}
(Graham, \cite{Gra79}) A \textit{Graham reduction} of a hypergraph $H=(V,\Gamma )$ is defined by
applying the following two operations to \textit{H} until they can be
applied no more.

\begin{itemize}
\item  Node removal: If a node appears in only one hyperedge, delete it from
V and from the edge.

\item  Hyperedge removal: In the the transformed hyperedge set, delete a
hyperedge if it is subset of another hyperedge.
\end{itemize}
\end{algorithm}

In \cite{BeeFagMaiYan83} is shown that a hypergraph reducies to nothing by this process if
and only if the hypergraph is acyclic.

In the Figure \ref{fig:1} one can see a) a triangulated graph, b) the
corresponding acyclic hypergraph and c) the corresponding junction tree.

\begin{figure}
  \includegraphics[bb=80 590 420 770,width=10cm]{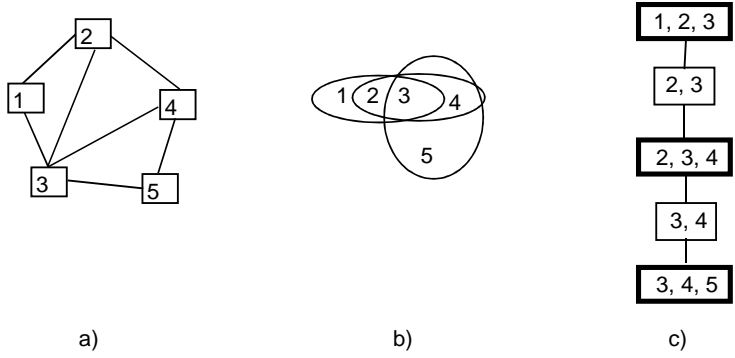}
\caption{a) Triangulated graph, b) The corresponding acyclic hypergraph, c) The corresponding junction tree}
\label{fig:1}       
\end{figure}

We consider the random vector $\mathbf{X}=\left( X_1,\ldots ,X_d\right) ^T$, with the
set of indicies $V=\left\{ 1,\ldots ,d\right\} $. Roughly speaking a Markov
network encodes the conditional independences between the random variables.
The graph structure associated to a Markov network consists in the set of
nodes V, and the set of edges $E=\left\{ \left( i,j\right) |i,j\in V\right\} 
$. We say the graph structure associated to the Markov network has

\begin{itemize}
\item  the \textit{pairwise Markov} (PM) property if $\forall i,j\in V$, $i$
not connected to $j$ implies that $X_i$ and $X_j$ are conditionally
independent given all the other random variables;

\item  the \textit{local Markov (LM)} property if $\forall i\in V,$ and
$ Ne\left( i\right) $ the neighbourhood of node $i$
in the graph (the nodes connected with $i$) then $X_i$ is conditionally independent from
all $X_j$, $j\notin Ne\left( i\right) $, given $X_k,k\in Ne\left( i\right) $;

\item  the global Markov (GM) property states that if in the graph 
$\forall A,B,C\subset V$ and C separates A and B in terms of graph then $\mathbf{X}_A$%
and $\mathbf{X}_B$ are conditionally independent given $\mathbf{X}_C$, which means in terms of
probabilities that 
\[P\left( \mathbf{X}_{A\cup B\cup C}\right) =\frac{P\left( \mathbf{X}_{A\cup
C}\right) P\left( \mathbf{X}_{A\cup C}\right) }{P\left( \mathbf{X}_C\right) };
\]

\item  the \textit{factorization (F)} property states that if $\mathcal{C}$
denotes the set of cliques of the graph (maximum complete graphs) then there
exist positive functions $\psi _C\left( \mathbf{X}_C\right)$ that
\[
P\left( \mathbf{X}_V\right) =\prod\limits_{C\in \mathcal{C}}\psi _C\left( \mathbf{X}_C\right).
\]
\end{itemize}

The following implication is well known \cite{Ham71}: $F\Rightarrow GM\Rightarrow
LM\Rightarrow PM$. The Hammersley-Clifford theorem states that under
assumption of positivity $PM\Longrightarrow F$. However positivity is a very
strong condition. ''The positivity condition is mathematically convenient; But it hardly
seems necessary'' \cite{Ham71}. In this paper we focus on Markov network
characterized by the global Markov property.

The concept of \textit{junction tree probability distribution} is related to
the junction tree graph and to the global Markov property of the graph. A
junction tree probability distribution is defined as a product and division of
marginal probability distributions as follows:

\[
P\left( \mathbf{X}\right) =\frac{\prod\limits_{C\in \mathcal{C}}P\left(
\mathbf{X}_C\right) }{\prod\limits_{S\in \mathcal{S}}\left[ P\left( \mathbf{X}_S\right)
\right] ^{\nu _S-1}},
\]
where $\mathcal{C}$ is the set of clusters of the
junction tree, $\mathcal S$ is the set of separators, $\nu_S$ is the number of those clusters
which contain the separator S. We emphasize here that the equalities written as
$P(\mathbf{X})=f(P(\mathbf{X}_K), K\in \mathcal{C})$, where $f: \Omega_{\mathbf{X}}\rightarrow R$
hold for any possible realization of $\mathbf{X}$.

\begin{example}
\label{ex:1}
The probability distribution corresponding to Figure \ref{fig:1} is:
\[
\begin{array}{rcl}
P\left( \mathbf{X}\right) & = & \frac{P\left( \mathbf{X}_{\{1,2,3\}}\right) P\left(
\mathbf{X}_{\{2,3,4\}}\right) P\left(\mathbf{X}_{\{3,4,5\}}\right) }{P\left(\mathbf{X}_{\{2,3\}}\right)
P\left( \mathbf{X}_{\{3,4\}}\right) }\vspace{3mm} \\
 & = & \frac{P\left( X_1,X_2,X_3\right) P\left(
X_2,X_3,X_4\right) P\left( X_3,X_4,X_5\right) }{P\left( X_2,X_3\right)
P\left( X_3,X_4\right) }.
\end{array}
\]
\end{example}

In our paper \cite{SzaKov08} we introduced a special kind of $k$-width junction tree,
called $k$-th order $t$-cherry junction tree in order to approximate a joint
probability distribution. The $k$-th order $t$-cherry junction tree
probability distribution is associates to the $k$-th order $t$-cherry tree,
introduced in \cite{BuPre01}, \cite{BuSza02}.

\begin{definition}
\label{def:def1} 
The recursive construction of the \textit{k-th order
t-cherry tree}:

\begin{itemize}
\item  (i) The complete graph of $(k-1)$ nodes from $V$ represent the
smallest $k$-th order \textit{t}-cherry tree;

\item  (ii) By connecting a new vertex $i_k\in V$, with all $\left\{
i_1,\ldots ,i_{k-1}\right\} $ vertices of a $\left( k-1\right) $-
dimensional complete subgraph of the existing $k$-th order $t$-cherry tree, we
obtain a new $k$-th order $t$-cherry tree. $\left\{ \left\{ i_k\right\} \left\{
i_1,\ldots ,i_{k-1}\right\} \right\} $ is called \textit{k}-th order
hypercherry.

\item   (iii) A \textit{k}-th order $t$-cherry tree can be obtained from (i)
by successive application of (ii).
\end{itemize}

The \textit{k}-th order $t$-cherry tree is a special triangulated graph
therefore a junction tree structure is associated to it.
\end{definition}

\begin{definition}
\label{def:def2}
(\cite{SzaKov08})
The \textit{k-th order $t$-cherry junction tree} 
is defined in the following way:

\begin{itemize}
\item  By using Definition \ref{def:def1} we construct a \textit{k}-th order \textit{t}%
-cherry tree over $V$.

\item  To each hypercherry $\left\{ \left\{ i_k\right\} \left\{ i_1,\ldots
,i_{k-1}\right\} \right\} $ is assigned a cluster $\left\{ i_1,\ldots
,i_{k-1},i_k\right\}$ which a node of the junction tree and a separator $\left\{
i_1,\ldots ,i_{k-1}\right\} $ which is an edge of the junction tree.
\end{itemize}

We denote by $\mathcal{C}_{\mbox{ch}}$, and $\mathcal{S}_{\mbox{ch}}$, the
set of clusters and separators of the $t$-cherry junction tree.
\end{definition}

\begin{definition}
\label{def:def3}
(\cite{SzaKov08})
If the indices of the random vector $\mathbf{X}^T=\left( X_1,\ldots
,X_d\right)$ are assigned to a $t$-cherry junction tree structure then there
exists a probability distribution called 
\textit{\ t-cherry junction tree probability distribution} given by:

\[
P_{\mbox{t-ch}}(\mathbf{X})=\frac{\prod\limits_{C\in \mathcal{C}_{\mbox{ch}}}P\left( \mathbf{X}_C\right) }{%
\prod\limits_{S\in \mathcal{S}_{\mbox{ch}}}\left( P\left( \mathbf{X}_S\right) \right) ^{\nu _s-1}}%
.
\]
\end{definition}

\begin{remark}
\label{rem:rem1}
The marginal probability distributions involved in the above
formula are marginal probability distributions of $P\left( \mathbf{X}\right) $.
\end{remark}

Example \ref{ex:1} shows a 3-rd order $t$-cherry junction tree probability distribution.

In the following instead of probability distribution associated to a
junction tree we will use shortly junction tree pd and similarly instead of $k$-th order $t$-cherry
tree junction tree distribution we will use shortly $k$-th order $t$-cherry pd. Recently we found 
a paper \cite{Mal91} where Malvestuto introduced  
the same junction tree pd structure in a different way and named it \textit{elementary model
of rank k}.

The graph underlying the Markov network is usually unknown, the task of the
following section is to give a greedy algorithm, for finding a junction tree
starting from the $k$-th order marginal distributions, which are supposed to
be known.

\section{Sz\'{a}ntai-Kov\'{a}cs's greedy algorithm for finding an
approximating junction tree probability distribution}
\label{sec:3}

The problem is finding a $k$-width junction tree pd which gives the best
approximation for a discrete probability distribution $P(\mathbf{X})$. The
goodness of the approximation is quantified by the Kullback-Leibler
divergence, which have to be minimized:

\[
KL\left( P\left( \mathbf{X}\right) ,P_a\left( \mathbf{X}\right) \right) =%
\sum\limits_{\mathbf{x}}P\left( \mathbf{X}\right) \log _2\frac{P\left(
\mathbf{X}\right) }{P_a\left( \mathbf{X}\right) }\rightarrow \min.
\]

This minimization problem for $k>2$ can be solved in exact way
only by exhaustive search \cite{Pea88}. For k=2 the problem can be solved using
Kruskall's algorithm, as was first proposed by Chow and Liu \cite{Chow68}.

Malvestuto \cite{Mal91} and Sz\'{a}ntai et.al. \cite{SzaKov10} proved independently and
in different ways the following statement: If $P^k(\mathbf{X})$ is a $k$-width
junction tree pd approximation then there exists $P_{t-ch}^k(\mathbf{X})$
a $k$-th order $t$-cherry tree pd which gives at least as good approximation
as $P^k(\mathbf{X})$ does i.e.: 
\[KL\left( P\left( \mathbf{X}\right) ,P^k(%
\mathbf{X})\right) \geq KL\left( P\left( \mathbf{X}\right) ,P_{t-ch}^k(%
\mathbf{X})\right). 
\]
Hence this result we consider as search space the $k$-th
order $t$-cherry junction tree pd's.

In this part we first give a greedy algorithm to minimize the
Kullback-Leibler divergence between the true probability distribution and a
$t$-cherry junction tree pd given the $k$-th order marginal probability
distributions. We then compare our algorithm with Malvestuto's algorithm
from analytical point of view. Then we apply the two algorithms to the same
sample data proposed in \cite{Mal91}.

In \cite{SzaKov08} the authors give the following theorem.

\begin{theorem}
\label{theo:2}
The Kullback-Leibler divergence between the true $P(\mathbf{X})$ and the
approximation given by the $k$-width junction tree probability distribution $P(\mathbf{X}_{J})$, determined by the set
of clusters $\mathcal{C}$ and the set of separators $\mathcal{S}$ is :
\begin{equation}
\label{equ:equ1}
\begin{array}{rcl}
KL\left( P\left( \mathbf{X}\right) ,P_{J}\left( \mathbf{X}\right) \right)
& = & -H\left( \mathbf{X}\right) -\left( \sum\limits_{C\in \mathcal{C}}I\left(
\mathbf{X}_C\right) -\sum\limits_{S\in \mathcal{S}}\left( \nu _S-1\right) I\left(
\mathbf{X}_S\right) \right) \\
& + & \sum\limits_{i=1}^dH\left( X_i\right),
\end{array}
\end{equation}
where $I(\mathbf{X}_{C})=\sum\limits_{i\in \mathcal{C}} H\left(X_i\right) - H\left(
\mathbf{X}_{C}\right)$ represents the information content of the random vector $\mathbf{X}_{C}$
and similarly $I(\mathbf{X}_{S})=\sum\limits_{i\in \mathcal{S}} H\left(X_i\right) - H\left(
\mathbf{X}_{S}\right)$ represents the information content of the random vector $\mathbf{X}_{S}$.
\end{theorem}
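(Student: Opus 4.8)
The plan is to reduce everything to entropies of marginals, using the product form of the junction tree distribution together with the fact that $P(\mathbf{X})$ marginalizes correctly onto every cluster and separator. First I would split the divergence along the logarithm of the quotient,
\[
KL(P(\mathbf{X}),P_J(\mathbf{X}))=\sum_{\mathbf{x}}P(\mathbf{X})\log_2 P(\mathbf{X})-\sum_{\mathbf{x}}P(\mathbf{X})\log_2 P_J(\mathbf{X})=-H(\mathbf{X})-\sum_{\mathbf{x}}P(\mathbf{X})\log_2 P_J(\mathbf{X}),
\]
so that the whole computation is concentrated in the cross term $-\sum_{\mathbf{x}}P(\mathbf{X})\log_2 P_J(\mathbf{X})$.

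Next I would insert the definition $P_J(\mathbf{X})=\frac{\prod_{C\in\mathcal{C}}P(\mathbf{X}_C)}{\prod_{S\in\mathcal{S}}[P(\mathbf{X}_S)]^{\nu_S-1}}$ and pass to the logarithm, turning the product and quotient into
\[
\log_2 P_J(\mathbf{X})=\sum_{C\in\mathcal{C}}\log_2 P(\mathbf{X}_C)-\sum_{S\in\mathcal{S}}(\nu_S-1)\log_2 P(\mathbf{X}_S).
\]
The elementary but crucial observation is the marginalization identity: for any $A\subseteq V$, summing $P(\mathbf{x})$ over the coordinates outside $A$ returns the marginal $P(\mathbf{x}_A)$, hence
\[
\sum_{\mathbf{x}}P(\mathbf{X})\log_2 P(\mathbf{X}_A)=\sum_{\mathbf{x}_A}P(\mathbf{X}_A)\log_2 P(\mathbf{X}_A)=-H(\mathbf{X}_A).
\]
Applying this term by term to every cluster $C$ and every separator $S$, and using linearity of the sum over $\mathbf{x}$, gives the intermediate formula
\[
KL(P(\mathbf{X}),P_J(\mathbf{X}))=-H(\mathbf{X})+\sum_{C\in\mathcal{C}}H(\mathbf{X}_C)-\sum_{S\in\mathcal{S}}(\nu_S-1)H(\mathbf{X}_S).
\]

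It remains to rewrite the cluster and separator entropies in terms of the information contents. Substituting $H(\mathbf{X}_C)=\sum_{i\in C}H(X_i)-I(\mathbf{X}_C)$ and $H(\mathbf{X}_S)=\sum_{i\in S}H(X_i)-I(\mathbf{X}_S)$ produces the desired combination $-(\sum_C I(\mathbf{X}_C)-\sum_S(\nu_S-1)I(\mathbf{X}_S))$ together with a leftover of single-vertex entropies. Matching this leftover against the claimed term $\sum_{i=1}^d H(X_i)$ reduces the theorem to the purely combinatorial counting identity
\[
\sum_{C\in\mathcal{C}}\sum_{i\in C}H(X_i)-\sum_{S\in\mathcal{S}}(\nu_S-1)\sum_{i\in S}H(X_i)=\sum_{i=1}^d H(X_i).
\]

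The main obstacle, and the only place where the tree structure is used, is this counting identity, which I would establish vertex by vertex. Fix a vertex $i$ and let $n_i$ be the number of clusters containing it. By the running intersection property these clusters induce a connected subtree of the junction tree, and a tree on $n_i$ nodes has exactly $n_i-1$ edges; moreover an edge of the whole junction tree carries $i$ in its separator precisely when both of its endpoints contain $i$, i.e.\ precisely when it is an edge of this subtree. Thus the separator occurrences containing $i$ are exactly these $n_i-1$ subtree edges, and counting each distinct separator $S$ with the multiplicity $\nu_S-1$ reproduces that edge count, so the net coefficient of $H(X_i)$ is $n_i-(n_i-1)=1$. Summing over $i=1,\ldots,d$ yields the identity, and substituting it back gives exactly formula (\ref{equ:equ1}). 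Everything outside this counting step is linearity and the defining relation $I(\mathbf{X}_A)=\sum_{i\in A}H(X_i)-H(\mathbf{X}_A)$, so no further difficulty is expected.
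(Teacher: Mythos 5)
Your derivation is correct, and in fact there is no in-text proof to compare it against: the paper imports Theorem~\ref{theo:2} from \cite{SzaKov08} without proving it. Your route (split $KL$ into $-H(\mathbf{X})$ minus the cross term, expand $\log_2 P_J$ by the product form, marginalize term by term to obtain $-H(\mathbf{X})+\sum_{C\in\mathcal{C}}H(\mathbf{X}_C)-\sum_{S\in\mathcal{S}}(\nu_S-1)H(\mathbf{X}_S)$, then trade entropies for information contents via the vertex-counting identity) passes exactly through the paper's formula~(\ref{equ:equ2}), the one attributed to Malvestuto, so your proof has the added value of making explicit that (\ref{equ:equ1}) and (\ref{equ:equ2}) differ precisely by the combinatorial identity $\sum_{C}\sum_{i\in C}H(X_i)-\sum_{S}(\nu_S-1)\sum_{i\in S}H(X_i)=\sum_{i=1}^d H(X_i)$, which you rightly isolate as the only place the tree structure enters.

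One step deserves to be made explicit. In the counting argument you equate ``counting each distinct separator $S$ with multiplicity $\nu_S-1$'' with ``counting the edges of the junction tree whose label contains $i$''; this is valid exactly when $\nu_S-1$ equals the number of junction-tree edges labeled $S$. With the paper's literal definition ($\nu_S$ is the number of clusters containing $S$) this holds in the $t$-cherry setting, where all separators have size $k-1$: the clusters containing $S$ form a subtree by the running intersection property, and every edge of that subtree has separator $C\cap C'\supseteq S$ of size $k-1$, hence equal to $S$, so that subtree has $\nu_S$ nodes and $\nu_S-1$ edges all labeled $S$. For a general $k$-width junction tree with clusters of unequal sizes the two counts can diverge: take clusters $\{1,2,3,4\}$, $\{1,2,4,5\}$, $\{1,2,6,7\}$ in a path, with edge separators $\{1,2,4\}$ and $\{1,2\}$; all three clusters contain $\{1,2\}$, so the literal $\nu_{\{1,2\}}-1=2$, while only one edge is labeled $\{1,2\}$, and there the correct exponent (both for $P_J$ to be the junction tree factorization and for your identity to hold) is the edge multiplicity. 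Since the theorem defines $P_J$ with the same $\nu_S$ convention, your proof is exactly as valid as the statement itself; a one-line remark reading $\nu_S-1$ as the number of edges labeled $S$ (or restricting to uniform separator size, as in the $t$-cherry trees the paper actually uses) closes this.
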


In Formula (\ref{equ:equ1}) $-H\left( \mathbf{X}\right) +\sum\limits_{i=1}^dH\left(
X_i\right) =I\left( \mathbf{X}\right) $ is independent from the structure of
the junction tree. It is easy to see that minimizing the Kullback-Leibler
divergence means maximizing $\sum\limits_{C\in \mathcal{C}}I\left(
\mathbf{X}_C\right) -\sum\limits_{S\in \mathcal{S}}\left( \nu _S-1\right) I\left(
\mathbf{X}_S\right) $. We call this sum as \textit{weight of the junction tree pd}. As larger
this weight is, as better fits the approximation associated to the junction
tree pd to the true probability distribution. It is well known that $KL=0$ if $P(\mathbf{X})=P_{J}\left( \mathbf{X}\right)$.

In the case when the approximating probability distribution is given by a $k$-th order $t$-cherry
junction tree pd all of the clusters contain $k$ and 
all of the separators contain $k-1$ vertices in Formula (\ref{equ:equ1}).

Let $X=\left\{ X_1,\ldots ,X_d\right\} $a set of random variables.

\begin{definition}
\label{def:def4} 
We define the following concepts:

\begin{itemize}
\item  the search space:

\vspace{2mm}
$E=\left\{ \chi _{i_k\left( i_1,\ldots ,i_{k-1}\right) }=\left\{ \left\{
X_{i_k}\right\} ,\left\{ X_{i_1},\ldots ,X_{i_{k-1}}\right\} \right\}
|X_{i_1},\ldots ,X_{i_{k-1}},X_{i_k}\in X\right\} $,
\vspace{2mm}

\item  the independence set:

\vspace{2mm}
$\mathcal{F}=\phi \cup \left\{ t-\mbox{cherry junction tree structure}\right\} $,
\vspace{2mm}

\item  the weight function:

\vspace{2mm}
$w:E\rightarrow R\qquad w\left( \chi _{i_k\left( i_1,\ldots ,i_{k-1}\right)
}\right) =I\left( X_{i_1},\ldots ,X_{i_{k-1}},X_{i_k}\right) -I\left(
X_{i_1},\ldots ,X_{i_{k-1}}\right) $.
\end{itemize}
\end{definition}

\begin{algorithm}
\label{alg:2}
Sz\'{a}ntai-Kov\'{a}cs's greedy algorithm.

\textit{Input}: Elements of E and their weights which can be calculated
based on the $k$-th order marginal probability distributions.

\textit{Output}: set A which contains the clusters of the $k$-th order $t$%
-cherry juntion tree pd and the wheight of the $k$-th order $t$-cherry junction
tree pd.

\textit{The algorithm}:

$A:=\phi $

Sort $E$ into monotonically decreasing order by wheight $w$;

Choose $x={\arg \max }_{x\in E}\left( w\left( x\right) \right)$;

\qquad let $A:=A\cup \left\{ x\right\} ;\quad E:=E\backslash \left\{
x\right\} ;\quad w:=I\left( x\right)$;

Do for each $x\in E$ taken in monotonically decreasing order

\qquad if $A\cup \left\{ x\right\} \in \mathcal{F}$ then let $A:=A\cup \left\{ x\right\}
;\quad E:=E\backslash \left\{ x\right\} ;\quad w:=w+w\left( x\right) ;$

\qquad if the union of subsets of $A$ is $X$, then Stop;

\qquad else take the next element of $E$.
\end{algorithm}

In our $t$-cherry juntion tree terminology the KL divergence formula used by Malvestuto in his paper \cite{Mal91} is:

\begin{equation}
\label{equ:equ2}
KL\left( P\left( \mathbf{X}\right) ,P_{\mbox{t-ch}}\left( \mathbf{X}\right) \right)
=-H\left( \mathbf{X}\right) +\sum\limits_{C\in \mathcal{C}}H\left(
\mathbf{X}_C\right) -\sum\limits_{S\in \mathcal{S}}\left( \nu _S-1\right) H\left(
\mathbf{X}_S\right).
\end{equation}

In order to minimize the KL divergence Malvestuto had to minimize
\[
\sum\limits_{C\in \mathcal{C}}H\left( \mathbf{X}_C\right) -\sum\limits_{S\in 
\mathcal{S}}\left( \nu _S-1\right) H\left( \mathbf{X}_S\right)
\] 
in a greedy way.

Malvestuto's algorithm
uses the same search space $E$ and independence set $\mathcal{F}$. The
wheight function however is different:

$\omega :E\rightarrow R\qquad \omega \left( \chi _{i_k\left( i_1,\ldots
,i_{k-1}\right) }\right) =H\left( X_{i_1},\ldots ,X_{i_{k-1}},X_{i_k}\right)
-H\left( X_{i_1},\ldots ,X_{i_{k-1}}\right) $.

\begin{algorithm}
\label{alg:3}
\medskip Malvestuto's greedy algorithm.

\textit{Input}: Elements of E and their weights which can be calculated
based on the $k$-th order marginal probability distributions.

\textit{Output}: set A which contains the clusters of the $k$-th order $t$%
-cherry juntion tree probability distribution and the wheight of the $k$-th order $t$-cherry junction
tree.

$A:=\phi $

Sort $E$ into monotonically increasing order by wheight $w$;

Chose $x={\arg\min }_{x\in E}\left( H\left( x\right) \right) ;$

\qquad let $A:=A\cup \left\{ x\right\} ;\quad E:=E\backslash \left\{
x\right\} ;\quad \omega :=H\left( x\right) ;$

Do for each $x\in E$ taken in monotonically increasing order

\qquad if $A\cup \left\{ x\right\} \in \mathcal{F}$ then let $A:=A\cup \left\{ x\right\}
;\quad E:=E\backslash \left\{ x\right\} ;\quad \omega :=\omega +\omega
\left( x\right) ;$

\qquad if the union of subsets of $A$ is $X$, then Stop;

\qquad else take the next element of $E$.
\end{algorithm}

We present experimental results on the application of the two algorithms to
the probability distribution obtained from the sample data published in the paper \cite{Mal91}.  
These data contain informations on the structural habitat of grahami and opalinus
lizards. They were published originally by Bishop et al
\cite{BisFieHol75} and we give them in Table \ref{tab:1}.

\begin{table}[h]
\caption{Counts in structural habitat categories for Graham and Opalinus lizards}
\label{tab:1}       
\begin{tabular}{cccc}
\hline\noalign{\smallskip}
Cell ($X_1,X_2,X_3,X_4,X_5$) & Observed & Cell ($X_1,X_2,X_3,X_4,X_5$) & Observed \\
\noalign{\smallskip}\hline\noalign{\smallskip}
1 1 1 1 1  &  20  &  1 2 2 3 1  &  8  \\
2 1 1 1 1  &  13  &  2 2 2 3 1  &  4  \\
1 2 1 1 1  &  8   &  1 1 1 1 2  &  2  \\
2 2 1 1 1  &  6   &  1 2 1 1 2  &  3  \\
1 1 2 1 1  &  34  &  1 1 2 1 2  &  11 \\
2 1 2 1 1  &  31  &  2 1 2 1 2  &  5  \\
1 2 2 1 1  &  17  &  1 2 2 1 2  &  15 \\
2 2 2 1 1  &  12  &  2 2 2 1 2  &  1  \\
1 1 1 2 1  &  8   &  1 1 1 2 2  &  1  \\
2 1 1 2 1  &  8   &  1 2 1 2 2  &  1  \\
1 2 1 2 1  &  4   &  1 1 2 2 2  &  20 \\
1 1 2 2 1  &  69  &  2 1 2 2 2  &  4  \\
2 1 2 2 1  &  55  &  1 2 2 2 2  &  32 \\
1 2 2 2 1  &  60  &  2 2 2 2 2  &  5  \\
2 2 2 2 1  &  21  &  1 1 1 3 2  &  4  \\
1 1 1 3 1  &  4   &  1 2 1 3 2  &  3  \\
2 1 1 3 1  &  12  &  2 2 1 3 2  &  1  \\
1 2 1 3 1  &  5   &  1 1 2 3 2  &  10 \\
2 2 1 3 1  &  1   &  2 1 2 3 2  &  3  \\
1 1 2 3 1  &  18  &  1 2 2 3 2  &  8  \\
2 1 2 3 1  &  13  &  2 2 2 3 2  &  4  \\
\noalign{\smallskip}\hline
\end{tabular}
\end{table}

The data consists of observed counts for perch height ($<2'$ or %
$>2'$)--$X_1$, perch diameter ($<5''$ or $>5''$)--$X_2$, 
insolation (sun, shade)-- $X_3$, time of day categories (early,
midday, late) --$X_4$, lizard type (grahami, opalinus)--$X_5$ . 
The size of the contingeny table is $2\times2\times2\times3\times2$.

First we compare the goodness of fit of the 4-th order $t$-cherry junction tree 
found by Sz\'{a}ntai-Kov\'{a}cs's algorithm, then by Malvestuto's algorithm.

In Table \ref{tab:2} one can see the information contents of the marginal 
probability distribution of 4 random variables, 3 random variables and 
the weights used in Sz\'{a}ntai-Kov\'{a}cs's algorithm, ordered in decreasing way.

%
\begin{table}[h]
\caption{Illustration of Sz\'{a}ntai -- Kov\'{a}cs's algorithm}
\label{tab:2}       
\begin{tabular}{ccccc}
\hline\noalign{\smallskip}
Indices of the    & Indices of the      & $I(\mathbf{X}_{C})$ & $I(\mathbf{X}_{S})$ & $I(\mathbf{X}_{C})-I(\mathbf{X}_{S})$\\
cluster variables & separator variables &  &  & \\
\noalign{\smallskip}\hline\noalign{\smallskip}
{\bf 1 3 4 5}   &    1 3 5   &  {\bf 0.129381} & 0.045701 & 0.083680  \\
1 3 4 5   &    1 4 5   &   0.129381 & 0.047533 & 0.081848  \\
2 3 4 5   &    2 3 5   &   0.116608 & 0.035137 & 0.081470  \\
1 2 3 4   &    1 2 3   &   0.105531 & 0.026624 & 0.078907  \\
2 3 4 5   &    2 4 5   &   0.116608 & 0.038063 & 0.078544  \\
1 2 3 4   &    1 2 4   &   0.105531 & 0.029315 & 0.076216  \\
1 2 4 5   &    1 2 4   &   0.100251 & 0.029315 & 0.070936  \\
1 3 4 5   &    1 3 4   &   0.129381 & 0.066088 & 0.063294  \\
1 2 3 5   &    1 2 3   &   0.089070 & 0.026624 & 0.062446  \\
1 2 4 5   &    2 4 5   &   0.100251 & 0.038063 & 0.062187  \\
1 2 3 5   &    2 3 5   &   0.089070 & 0.035137 & 0.053933  \\
{\bf 1 2 4 5}   &   {\bf 1 4 5}   &   0.100251 & 0.047533 & {\bf 0.052718}  \\
\noalign{\smallskip}\hline
\end{tabular}
\end{table}

The junction tree obtained by Sz\'{a}ntai-Kov\'{a}cs's algorithm has two
clusters $\left\{ 1,3,4,5\right\} $, $\left\{ 1,2,4,5\right\} $ and one
separator $\left\{ 1,4,5\right\} $. The KL divergence in this
case is:
\[
\begin{array}{rcl}
KL&=&I\left( \mathbf{X}\right) -\left( I\left( \mathbf{X}_{\left\{
1,3,4,5\right\} }\right) -I\left( \mathbf{X}_{\left\{ 1,4,5\right\} }\right)
+I\left( \mathbf{X}_{\left\{ 1,2,4,5\right\} }\right) \right) \vspace{2mm}\\
&=&0.19519-(0.129381-0.047533+0.100251) = 0.013091.
\end{array}
\]

In Table \ref{tab:3} one can see the entropy of the marginal probability distribution of 4 random variables,
3 random variables and the weights used in Malvestuto's algorithm, ordered in increasing way.

The junction tree obtained by Malvestuto's algorithm has two clusters
$\left\{ 1,2,3,5\right\},$ $\left\{ 1,3,4,5\right\}$ and one separator 
$\left\{ 1,4,5\right\}$. 
The KL divergence in this case is:
\[
\begin{array}{rcl}
KL&=&-H\left( \mathbf{X}\right) +H\left( \mathbf{X}_{\left\{ 1,2,3,5\right\}
}\right) -H\left( \mathbf{X}_{\left\{ 1,3,5\right\} }\right) +H\left( 
\mathbf{X}_{\left\{ 1,3,4,5\right\} }\right)\vspace{2mm}\\
&=& -4.64164+3.288813-2.36849+3.743757 = 0.02244.
\end{array}
\]

%
\begin{table}[h]
\caption{Illustration of Malvestuto's algorithm}
\label{tab:3}       
\begin{tabular}{ccccc}
\hline\noalign{\smallskip}
Indices of the    & Indices of the      & $H(\mathbf{X}_{C})$ & $H(\mathbf{X}_{S})$ & $H(\mathbf{X}_{C})-H(\mathbf{X}_{S})$\\
cluster variables & separator variables &  &  & \\
\noalign{\smallskip}\hline\noalign{\smallskip}
{\bf 1 2 3 5}   &           &  {\bf 3.288813} &          &           \\
{\bf 1 3 4 5}   &  {\bf 1 3 5}    &  3.743757 & 2.368490 & {\bf 1.375267}  \\
2 3 4 5   &  2 3 5    &  3.783647 & 2.406170 & 1.377478  \\
1 2 3 4   &  1 2 3    &  3.943287 & 2.563246 & 1.380041  \\
1 2 4 5   &  1 2 5    &  4.046977 & 2.615873 & 1.431104  \\
\noalign{\smallskip}\hline
\end{tabular}
\end{table}

The two results of KL divergence reflect that the junction tree obtained by
our algorithm fits better to the probability distribution of the sample data.

If the task is fitting a third order $t$-cherry junction tree, then our
algorithm finds a $t$-cherry junction tree probability distribution, with $KL=0.0355415$. The
third order $t$-cherry junction tree given by Malvestuto's algorithm has the 
$KL=0.0375077$. 
The clusters found by our algorithm were
$\left\{ 3,4,5\right\} $, $\left\{ 1,4,5\right\} $, $\left\{ 1,2,5\right\} $
and those found by Malvestuto's algorithm were
$\left\{ 1,3,5\right\} $, $\left\{ 1,2,5\right\} $, $%
\left\{ 3,4,5\right\} $.

\section{Theorems related to the Sz\'{a}ntai-Kov\'{a}cs's algorithm}
\label{sec:4}

This part contains some theoretical discussions on the algorithm introduced,
regarding to assumptions related to the Markov network underlying the
variables.

As we remind in the preliminary part a triangulated graph can be represented
as a junction tree structure. If the graph is complete then the junction
tree has only one cluster.

If a graph is not triangulated, then by adding edges it can be transformed
into a triangulated graph. The problem of \textit{,,fill in as few edges as possible''} is
known to be NP complete (\cite{Yan81}). A greedy algorithm was given by Tarjan and Yanakakis
\cite{TarYan84}. 

If the vertices of a graph represent the indices of the
random variables of a Markov network with global Markov property then by adding new edges to the graph  
results a Markov network having the global Markov property, too.

If the graph associated to a Markov network is not complete then it can be
transformed into a triangulated graph by adding edges which is
equivalent with a junction tree structure, let say of order $k$. Since the
global Markov property holds for this graph the probability distribution can be
written as a product-division type, where the largest marginal probability distribution contains $k$
variables. A logical question which arises here is if the greedy algorithm
does find the $k$-th order junction tree which gives the true probability
distribution. For this question the answer is that under some assumption our
greedy algorithm guaranties the optimal solution, which in this context is
the true probability distribution.

We need the following assertion:

\begin{lemma}
\label{lem:1}
$H\left( X_1|X_2,\ldots ,X_k\right) =H\left( X_1\right) -\left[ I\left(
X_1,\ldots ,X_k\right) -I\left( X_2,\ldots ,X_k\right) \right] $.
\end{lemma}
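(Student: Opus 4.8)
The plan is to reduce everything to the definition of information content given just before Theorem \ref{theo:2} together with the chain rule for entropy. Recall that for any index set $C$ the paper defines $I(\mathbf{X}_C) = \sum_{i\in C} H(X_i) - H(\mathbf{X}_C)$. So the proof is essentially a bookkeeping computation: I expand the two information contents on the right-hand side, subtract, observe the massive cancellation of the individual $H(X_i)$ terms, and then recognize what survives as a conditional entropy.

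Concretely, I first write out the two terms explicitly:
\[
I(X_1,\ldots,X_k) = \sum_{i=1}^{k} H(X_i) - H(X_1,\ldots,X_k),
\qquad
I(X_2,\ldots,X_k) = \sum_{i=2}^{k} H(X_i) - H(X_2,\ldots,X_k).
\]
Subtracting the second from the first, the sums $\sum_{i=2}^{k} H(X_i)$ cancel and only the $i=1$ single-variable entropy remains, giving
\[
I(X_1,\ldots,X_k) - I(X_2,\ldots,X_k)
= H(X_1) - H(X_1,\ldots,X_k) + H(X_2,\ldots,X_k).
\]

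I then substitute this into the right-hand side of the claimed identity. The two copies of $H(X_1)$ cancel, leaving
\[
H(X_1) - \bigl[\,I(X_1,\ldots,X_k) - I(X_2,\ldots,X_k)\,\bigr]
= H(X_1,\ldots,X_k) - H(X_2,\ldots,X_k).
\]
Finally I invoke the chain rule for Shannon entropy, $H(X_1,\ldots,X_k) = H(X_1 \mid X_2,\ldots,X_k) + H(X_2,\ldots,X_k)$, which rearranges to exactly $H(X_1,\ldots,X_k) - H(X_2,\ldots,X_k) = H(X_1 \mid X_2,\ldots,X_k)$, completing the argument.

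I do not expect any genuine obstacle here: the statement is an algebraic identity among entropies, and the only facts needed are the paper's own definition of $I(\mathbf{X}_C)$ and the standard chain rule. The one point worth stating carefully is that all entropies refer to marginals of the same underlying distribution $P(\mathbf{X})$, so that the chain rule applies verbatim; beyond that the result is immediate.
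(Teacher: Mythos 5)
Your proof is correct and is essentially the paper's own argument run in reverse: the paper starts from $H(X_1\mid X_2,\ldots,X_k)=H(X_1,\ldots,X_k)-H(X_2,\ldots,X_k)$ and inserts the marginal entropy sums to recognize the information contents, while you expand the definition of $I(\mathbf{X}_C)$ and finish with the same chain-rule identity. No gap; nothing further is needed.
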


\begin{proof}
\[
\begin{array}{rcl}
H\left( X_1|X_2,\ldots ,X_k\right) & = & H\left( X_1,X_2,\ldots ,X_k\right)
-H\left( X_2,\ldots ,X_k\right) \\
& = & H\left( X_1,X_2,\ldots,X_k\right) -\sum\limits_{i=1}^kH\left( X_i\right) \\
& - & \left( H\left( X_2,\ldots ,X_k\right) -\sum\limits_{i=2}^kH\left(X_i\right) \right) +H\left( X_1\right)\\
& = & H\left( X_1\right)-\left( I\left( X_1,X_2,\ldots ,X_k\right) -I\left( X_2,\ldots ,X_k\right)\right).
\end{array}
\]
\end{proof}

\begin{remark}
\label{rem:rem2}
It is easy to see that maximizing $I\left( X_1,\ldots ,X_k\right) -I\left(
X_2,\ldots ,X_k\right) $ is the same as maximizing $H\left( X_1\right)
-H\left( X_1|X_2,\ldots ,X_k\right) $.
\end{remark}

We introduce the following notations.

Let $\mathcal{K}=\left\{ K=\left\{ i_1,\ldots ,i_k\right\} |i_1,\ldots
,i_k\in V\right\} $ be the set of all possible $k$-element subsets of $V$. 

Let $M_K:\mathcal{K}\rightarrow R$ be defined as $M_K=\max_{i_s\in K}%
\left\{ I\left( \mathbf{X}_K\right) -I\left( \mathbf{X}_{K-\left\{
i_s\right\} }\right) \right\} $ and let 

\begin{equation}
\label{equ:equ3}
K^{*}=\arg \max_{K\in \mathcal{K}} M_K.
\end{equation}

We prove the following two theorems.

\begin{theorem}
\label{theo:3}
If \textbf{X }has a \textit{k}-th order \textit{t}-cherry tree
representation then $K^{*}$is a cluster of the junction tree.
\end{theorem}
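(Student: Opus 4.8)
The plan is to translate the combinatorial quantity $M_K$ into an information-theoretic one and then read off the optimizer from the conditional independences carried by the $t$-cherry representation. First I would rewrite the weight: by Lemma \ref{lem:1} (and Remark \ref{rem:rem2}), for a hyperedge $K$ with distinguished vertex $i_s$,
\[
I(\mathbf X_K) - I(\mathbf X_{K\setminus\{i_s\}}) = H(X_{i_s}) - H(X_{i_s}\mid \mathbf X_{K\setminus\{i_s\}}) = I(X_{i_s};\mathbf X_{K\setminus\{i_s\}}),
\]
the mutual information between $X_{i_s}$ and the remaining $k-1$ variables of $K$. Hence $M_K = \max_{i_s\in K} I(X_{i_s};\mathbf X_{K\setminus\{i_s\}})$, and maximizing $M_K$ over $K$ amounts to choosing one variable $X_{i_s}$ together with $k-1$ further variables so as to minimize the conditional entropy $H(X_{i_s}\mid \mathbf X_{K\setminus\{i_s\}})$.

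Second, I would invoke the monotonicity of mutual information: adjoining further variables to the second argument cannot decrease $I(X_v;\cdot)$, so for every vertex $v$ and every set $B$ with $v\notin B$,
\[
I(X_v;\mathbf X_B)\le I(X_v;\mathbf X_{V\setminus\{v\}}),
\]
the right-hand side being the largest value any hypercherry with tip $v$ could attain. The essential input from the hypothesis is now the global Markov property of the $t$-cherry representation: if $w$ is a simplicial vertex of the tree with separator $S_w$ (which has exactly $k-1$ elements, $S_w$ being the unique minimal set separating $w$ from the rest), then $X_w$ is conditionally independent of $\mathbf X_{V\setminus(S_w\cup\{w\})}$ given $\mathbf X_{S_w}$, so that $H(X_w\mid \mathbf X_{S_w}) = H(X_w\mid \mathbf X_{V\setminus\{w\}})$. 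Thus the upper bound above is attained using only $k-1$ conditioning variables, precisely by the genuine cluster $C_w = S_w\cup\{w\}$, which gives $M_{C_w}\ge I(X_w;\mathbf X_{V\setminus\{w\}})$.

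Combining these two observations, the weight of an arbitrary hypercherry is bounded by the full mutual information of its tip, and this bound is realized by an actual cluster of the junction tree. I would then conclude that the global maximizer $K^{*}$ of (\ref{equ:equ3}) must itself be a cluster, with its optimal index $i_s$ playing the role of the added vertex and $K^{*}\setminus\{i_s\}$ the separator; otherwise a leaf cluster of the tree would furnish a hypercherry of weight at least $M_{K^{*}}$, contradicting the maximality of $K^{*}$ unless $K^{*}$ is already a cluster.

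The step I expect to be the main obstacle is this last one: ruling out that a set $K$ which is \emph{not} a cluster could realize, or merely tie, the maximum. Concretely, one must exclude a non-simplicial tip paired with a $(k-1)$-set that is not a separator, for which the conditioning set is strictly smaller than the Markov blanket and the bound is strict. I would handle this by an exchange argument on the junction tree: starting from an alleged optimal non-clique $K$, use the conditional independence supplied by a separator contained in $K\setminus\{i_s\}$ to replace an off-blanket vertex by a neighbour without decreasing $I(X_{i_s};\mathbf X_{K\setminus\{i_s\}})$, iterating until $\{i_s\}\cup(K\setminus\{i_s\})$ is a clique and hence a cluster. Proving that each swap is non-decreasing, and that ties can be broken in favour of a cluster (for instance under the genericity assumption that the weights are pairwise distinct), is the delicate part of the argument.
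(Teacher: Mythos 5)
Your opening reduction (via Lemma \ref{lem:1}) coincides with the paper's, but your central comparison does not close the argument, and you partly concede this yourself. You bound the weight of an arbitrary hypercherry by $M_K \le I\left(X_v;\mathbf{X}_{V\setminus\{v\}}\right)$ for its tip $v$, and you show that a simplicial vertex $w$ together with its own cluster attains $I\left(X_w;\mathbf{X}_{V\setminus\{w\}}\right)$. These two facts involve \emph{different} vertices and are incomparable: nothing forces $I\left(X_w;\mathbf{X}_{V\setminus\{w\}}\right)\ge I\left(X_v;\mathbf{X}_{V\setminus\{v\}}\right)$ when $w$ is a leaf tip and $v$ is the tip of the alleged maximizer $K^{*}$. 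So your claim that ``a leaf cluster of the tree would furnish a hypercherry of weight at least $M_{K^{*}}$'' does not follow; a non-cluster $K^{*}$ whose optimal tip is an interior vertex with large total mutual information is not excluded by anything you wrote. Your fallback, the exchange argument, is only a plan: you never establish that a single swap is non-decreasing, and this is genuinely problematic, because the conditional independence you would invoke requires the relevant junction-tree separator to be contained in the current conditioning set $K\setminus\{i_s\}$, which an arbitrary $(k-1)$-set has no reason to satisfy. As stated, the proposal has a real gap exactly at the step the theorem needs.

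The paper closes this gap not by a global bound plus exchanges, but by a localization you are missing: assuming $K^{*}=\{i_1,\ldots,i_k\}\notin\mathcal{C}$, it takes the \emph{smallest subjunction tree} whose clusters cover all of $i_1,\ldots,i_k$; in that subtree some vertex of $K^{*}$, say $i_1$, is simplicial, contained in a unique cluster $\{i_1,s_1,\ldots,s_{k-1}\}$ with $\{s_1,\ldots,s_{k-1}\}\neq\{i_2,\ldots,i_k\}$. The global Markov property then compares two cherries \emph{with the same tip} $i_1$: $H\left(X_{i_1}|X_{s_1},\ldots,X_{s_{k-1}}\right) < H\left(X_{i_1}|X_{i_2},\ldots,X_{i_k}\right)$, which by Lemma \ref{lem:1} says the cherry on $\{i_1,s_1,\ldots,s_{k-1}\}$ has strictly larger weight than the cherry on $K^{*}$ with tip $i_1$, contradicting (\ref{equ:equ3}) in one step. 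Because both cherries share the tip, the incomparability that sinks your argument never arises, and no iterated swaps are needed. (The tie issue you raise is not imaginary: conditioning on the blanket gives only $\le$ in general, and the paper's strict inequality is itself a genericity-type assertion; but noting that defect does not repair your proof. The ingredient you lack is the restriction to the minimal covering subtree, which manufactures a same-tip competitor inside the junction tree directly.)
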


\begin{proof}
We make the proof by contradiction. We suppose $K^{\star}=\left\{ i_1,\ldots
,i_k\right\} \notin \mathcal{C}$. Let us consider the smallest
subjunction tree which contains all the vertices $i_1,\ldots ,i_k$ at least
once. In this subjunction tree one of the vertices $i_1,\ldots ,i_k$ is a
simplicial vertex (a vertex which is contained in one cluster only). For
simplicity let this vertex be $i_1$ and the cluster which contains it $%
\left\{ i_1,s_1,\ldots ,s_{k-1}\right\} $, with $\left\{ s_1,\ldots
,s_{k-1}\right\} \neq \left\{ i_2,\ldots ,i_{k}\right\} $. We emphasize
here that it is not necessary that $\left\{ s_1,\ldots ,s_{k-1}\right\} \cap%
\left\{ i_2,\ldots ,i_{k}\right\} =\phi $.

Since $i_1$ is a simplicial vertex 
$X_{i_1}$ depends on all the other random variables of the subjunction tree only through
its neighbours $X_{s_1},\ldots ,X_{s_{k-1}}$, therefore

\[
H\left( X_{i_1}|X_{s_1},\ldots ,X_{s_{k-1}}\right) 
<H\left(X_{i_1}|X_{i_2},\ldots ,X_{i_{k}}\right).
\]
Using Lemma \ref{lem:1} this inequality is equivalent to:
\[
\begin{array}{rl}
&H\left( X_{i_1}\right) -\left[ I\left( X_{i_1},X_{s_1},\ldots
,X_{s_{k-1}}\right) -I\left( X_{s_1},\ldots ,X_{s_{k-1}}\right) \right] \vspace{2mm} \\
<&H\left( X_{i_1}\right) -\left[ I\left( X_{i_1},X_{i_2},\ldots
,X_{i_{k}}\right) -I\left( X_{i_2},\ldots ,X_{i_{k}}\right) \right]
\end{array}
\]
that is
\[
I\left( X_{i_1},X_{s_1},\ldots ,X_{s_{k-1}}\right) -I\left( X_{s_1},\ldots
,X_{s_{k-1}}\right) >I\left( X_{i_1},X_{i_2},\ldots ,X_{i_{k}}\right)
-I\left( X_{i_2},\ldots ,X_{i_{k}}\right)
\]
which is in contradiction with the hypothesis that $\left\{ i_1,\ldots
,i_k\right\} =K^{*}$.
\end{proof}

In the following we introduce the so called puzzle-algorithm, wich results a
special numbering of the verticies of t-cherry junction tree.

\begin{algorithm}
\label{alg:4}
Puzzle algorithm.

\textit{Input}: a $k$-th order $t$-cherry juncton tree $H\left( V,\Gamma
\right) $, (acyclic hypergraph with edges of size k, and separators of size
k-1)

\textit{Output}: a numbering $\left\{i_{1},\ldots,i_{d}\right\}$ of the verticies of $V=\left\{ 1,\ldots
,d\right\} $.

\textit{Step} 1. \textit{Initialization}.

\qquad Let $e_{i}\in\Gamma$, call it parent edge. The verticies belonging to the
parent edge are 

\qquad numbered in an arbitrary order by $i_{1},\ldots,i_{k}$.

\qquad $s:=k,\ \mathcal{S}_{s}:=\left\{ S_{i_{1}},\ldots,S_{i_{k}}\right\} $, where
for $j=1,\ldots,k,$ $S_{i_{j}}$ are all the $k-1$ 

\qquad element subset of $e_{i}$.

\textit{Step} 2. \textit{Iteration}.
 
\qquad Do $\Gamma=\Gamma\backslash e_{i}$.

\qquad Do if $\Gamma \neq \phi $ then take $e_i\in \Gamma $, which contains
one of the elements $S$ of $\mathcal{S}_s$.

\qquad \qquad Set $s:=s+1$,

\qquad \qquad assign $i_{s}$ to $i=e_{i}\backslash S$.

\qquad \qquad $\mathcal{S}_{s}=\mathcal{S}_{s-1}\cup\left\{
S_{i_{1}},\ldots,S_{i_{k}}\right\} $, where for $j=1,\ldots,k,$ $S_{i_{j}}$
are all the $k-1$ 

\qquad \qquad element subset of $e_i.$ Go to Step 2.

\qquad else Stop.
\end{algorithm}

\begin{definition}
\label{def:def5} 
The numbering $\left\{ i_{1},\ldots,i_{d}\right\} $
of the verticies of $V=\left\{ 1,\ldots,d\right\} ,$ obtained using Algorithm \ref{alg:4},
is called \textit{puzzle numbering}.
\end{definition}

\begin{theorem}
\label{theo:4} 
If the following two assumptions are fulfilled then the
Sz\'{a}ntai-Kov\'{a}cs algorithm finds the true probability distribution.

(i) The Markov network can be transformed into a $k$-th order $t$-cherry tree by adding some edges if it is necessary.

(ii) Starting from the parent cluster defined by (\ref{equ:equ3}) there exists a puzzle
numbering with the following property: for all $i_{r}<i_{s}$ and for any $S\in S_{r}$

\[
H\left( X_{i_{s}}\right)
-H\left( X_{i_{s}}|S\right) <H\left( X_{i_{r}}\right) -H\left(
X_{i_{r}}|S_{i_{r}}\right), 
\]
where $S_{i_{r}}$ is the
separator which separates $i_{r}$ from the tree containing the verticies $%
\left\{ i_{1},\ldots,i_{r-1}\right\} .$
\end{theorem}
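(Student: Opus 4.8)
The plan is to prove that, under (i) and (ii), the greedy Algorithm \ref{alg:2} reconstructs exactly the true $k$-th order $t$-cherry tree $T^{*}$ whose existence is guaranteed by (i); since $\mathbf{X}$ factorizes on $T^{*}$, reproducing $T^{*}$ forces $KL=0$ and the output equals the true distribution. The first thing I would record is that, by Remark \ref{rem:rem2}, the weight attached to a hypercherry $\chi_{j(S)}=\{\{X_{j}\},S\}$ is $w(\chi_{j(S)})=I(\mathbf{X}_{S\cup\{j\}})-I(\mathbf{X}_{S})=H(X_{j})-H(X_{j}\mid S)$, so the very quantities compared in (ii) are the weights used by the algorithm. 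Hence (ii) says precisely that, along the puzzle numbering, the weight of the true hypercherry attaching $i_{r}$ to its separator $S_{i_{r}}$ strictly exceeds the weight of attaching any later vertex $i_{s}$ ($s>r$) to any separator available at that stage.

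For the base case I would observe that the first choice is $\arg\max_{x\in E}w(x)$, and since $w(\chi_{i_{k}(i_{1},\dots,i_{k-1})})=I(\mathbf{X}_{K})-I(\mathbf{X}_{K\setminus\{i_{k}\}})$ for $K=\{i_{1},\dots,i_{k}\}$, maximizing over all hypercherries coincides with maximizing $M_{K}$ over $K\in\mathcal{K}$; thus the first cluster produced is exactly $K^{*}$. By Theorem \ref{theo:3}, $K^{*}$ is a cluster of every $t$-cherry representation of $\mathbf{X}$, so I fix $T^{*}$ and take the puzzle numbering of (ii) to have $K^{*}$ as its parent edge.

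The core is an induction showing that the decreasing-weight scan adds exactly the hypercherries of $T^{*}$ in puzzle order. Putting $s=r+1$ in (ii), for which $S_{i_{r+1}}\in\mathcal{S}_{r}$, gives $w(\chi_{i_{r}(S_{i_{r}})})>w(\chi_{i_{r+1}(S_{i_{r+1}})})$, so the true hypercherries are already weight-sorted in puzzle order. Assume inductively that when the scan reaches $\chi_{i_{r}(S_{i_{r}})}$ it has added exactly $\chi_{i_{k+1}(S_{i_{k+1}})},\dots,\chi_{i_{r-1}(S_{i_{r-1}})}$ and nothing else; then the current tree spans $V'=\{i_{1},\dots,i_{r-1}\}$, its $(k-1)$-cliques are exactly $\mathcal{S}_{r-1}$, the separator $S_{i_{r}}$ is available and $i_{r}\notin V'$, so $\chi_{i_{r}(S_{i_{r}})}$ is feasible and gets added. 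I then have to rule out any competitor being chosen instead. A competitor $\chi_{i_{s}(S)}$ with $s>r$ and $S\in\mathcal{S}_{r-1}\subseteq\mathcal{S}_{r}$ has, by (ii), weight strictly below $w(\chi_{i_{r}(S_{i_{r}})})$, hence is scanned strictly later and cannot be selected before $i_{r}$ is placed; a competitor attaching the correct vertex $i_{r}$ to a wrong separator $S\neq S_{i_{r}}$ is handled by the independence argument below. Once $r=d$ the whole of $T^{*}$ has been rebuilt, and $KL=0$ follows.

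The step I expect to be the genuine obstacle is the wrong-separator case, which (ii) does not address and which must be settled through the conditional-independence structure of $T^{*}$. Because $\mathbf{X}$ factorizes on $T^{*}$ and $i_{r}$ is simplicial with neighbourhood $S_{i_{r}}$ in the subtree on $V'\cup\{i_{r}\}$, the global Markov property yields $X_{i_{r}}\perp\mathbf{X}_{V'\setminus S_{i_{r}}}\mid\mathbf{X}_{S_{i_{r}}}$; hence for every $S\subseteq V'$ one gets $H(X_{i_{r}}\mid S)\ge H(X_{i_{r}}\mid S,S_{i_{r}})=H(X_{i_{r}}\mid S_{i_{r}})$ by the fact that conditioning cannot increase entropy, so $w(\chi_{i_{r}(S)})\le w(\chi_{i_{r}(S_{i_{r}})})$ and no wrong-separator attachment beats the true one. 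Two points there need care: justifying the displayed independence by checking that marginalizing the later simplicial vertices out of $T^{*}$ preserves the decomposable factorization on the subtree over $V'\cup\{i_{r}\}$; and handling a possible tie $H(X_{i_{r}}\mid S)=H(X_{i_{r}}\mid S_{i_{r}})$, where I would argue that such an $S$ is itself consistent with the factorization of $\mathbf{X}$, so selecting it still produces a $t$-cherry tree with $KL=0$ and leaves the conclusion intact.
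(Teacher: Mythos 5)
Your proposal is correct in substance and follows the same skeleton as the paper's proof: the first selected cluster is $K^{*}$ by Theorem~\ref{theo:3}; one then argues inductively along the puzzle numbering that at each stage the greedy choice must be the true attachment, splitting the competitors into (a) a later vertex $i_{s}$, $s>r$, attached via some available separator, which assumption (ii) kills outright (this is the paper's ``possibility 2,'' where the cluster $(S_i,i_n)$ lies on the path to $i_m$'s cluster and $i_n<i_m$ in the puzzle numbering), and (b) the right vertex attached via a wrong separator (the paper's ``possibility 1''). Where you genuinely diverge is in step (b). The paper writes the factorization $P(\mathbf{X}_{T_{m-1}},X_{i_m})=P(\mathbf{X}_{T_{m-1}})P(\mathbf{X}_{S_j},X_{i_m})/P(\mathbf{X}_{S_j})$ from the global Markov property, observes the corresponding $KL$ is $0$ for the true separator $S_j$, asserts it is strictly positive when $S_i$ does not separate $i_m$ from $T_{m-1}$, and subtracts to get the strict weight inequality $I(\mathbf{X}_{S_i},X_{i_m})-I(\mathbf{X}_{S_i})<I(\mathbf{X}_{S_j},X_{i_m})-I(\mathbf{X}_{S_j})$. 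You instead use the marginal conditional independence $X_{i_r}\perp\mathbf{X}_{V'\setminus S_{i_r}}\mid\mathbf{X}_{S_{i_r}}$ together with monotonicity of conditional entropy, $H(X_{i_r}\mid S)\geq H(X_{i_r}\mid S,S_{i_r})=H(X_{i_r}\mid S_{i_r})$, which yields only a non-strict inequality. These are two faces of the same fact, but the accounting differs: the paper's strict positivity of the $KL$ is really an unstated faithfulness-type hypothesis (non-separation in the graph does not by itself force dependence), which your entropy route makes visible by producing a tie case; your remark that one must check the marginal over $\{i_1,\dots,i_r\}$ still factorizes on the subtree is also a point the paper glosses over.

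The one loose end in your version is precisely that tie branch. Your claim that a tie $H(X_{i_r}\mid S)=H(X_{i_r}\mid S_{i_r})$ lets the algorithm attach via $S$ ``with the conclusion intact'' is under-justified: after such a deviation the constructed tree is no longer a prefix of $T^{*}$, the available separator sets differ from the $\mathcal{S}_{r}$ of the true puzzle numbering, and assumption (ii) --- which is stated relative to that numbering and those separators --- no longer bounds the weights of later competitors; so exactness of all subsequent steps (needed for the telescoping $KL=0$ argument you implicitly invoke) is not secured. You can close this either by adopting the paper's implicit convention that the inequality in (b) is strict (i.e., reading non-separation as strictly positive $KL$, as equations (\ref{equ:equ5})--(\ref{equ:equ6}) do), in which case your proof coincides with the paper's up to the choice of lemma, or by strengthening the hypothesis to exclude ties. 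As written, your main line is sound and arguably more careful than the original; only the tie analysis needs to be completed or explicitly assumed away.
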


\begin{proof}
We proved in Theorem \ref{theo:3} that the cluster $K^{\ast}$ which satisfies
(\ref{equ:equ3}) is a cluster of the junction tree associated to the Markov network. We
choose this cluster as parent edge.

Let us suppose that the Sz\'{a}ntai-Kov\'{a}cs Algorithm, has in the
constructed junction tree already $m-1$ verticies. We denote this set of verticies by $V_{m-1}$. 
The set of possible separators at this end is $\mathcal{S}_{m-1}$ .

The Sz\'{a}ntai-Kov\'{a}cs algorithm adds a new cluster by maximizing

\[
I\left( X_{i_{m}},\mathbf{X}_{S_{i}}\right) -I\left( \mathbf{X}%
_{S_{i}}\right) ,\mbox{ where }i_{m}\in V\backslash V_{m-1}\mbox{ and }%
S_{i}\in\mathcal{S}_{m-1} 
\]

According to Remark \ref{rem:rem1} this is equivalent with maximizing

\begin{equation}
\label{equ:equ4}
H\left( X_{i_{m}}\right) -H\left( X_{i_{m}}|S_{i}\right) \mbox{, where }%
i_{m}\in V\backslash V_{m-1}\mbox{ and }S_{i}\in\mathcal{S}_{m-1}. 
\end{equation}

We suppose now by contradiction that $i_m$is not connected to the existing
junction tree through $S_i$. Since the junction tree is a connected
hypergraph, there exist two possibilities:

\begin{itemize}
\item[1.] $i_m$ is separated from the existing tree $T_{m-1}$ by another separator $%
S_j\in \mathcal{S}_{m-1};$

\item[2.] There exists $i_n\in V\backslash V_{m-1}$ which is connected with the
existing junction tree by $S_i\in \mathcal{S}_{m-1}$, and the cluster $%
\left( S_i,i_n\right) $ is on the path between the existing tree $T_{m-1}$ and the
cluster which contains $i_m$.
\end{itemize}

Now we pove that none of the two possibilities can occur.

\begin{itemize}
\item[1.] If $i_m$ is separated from the existing tree $T_{m-1}$ by another
separator $S_j\in \mathcal{S}_{m-1}$ then according to the global Markov property we have:

\[
P\left( \mathbf{X}_{T_{m-1}},X_{i_m}\right) =\frac{P\left( \mathbf{X}%
_{T_{m-1}}\right) P\left( \mathbf{X}_{S_j}X_{i_m}\right) }{P\left( \mathbf{%
X}_{S_j}\right) }. 
\]

This implies that the Kullback Leibler between 
\[P\left( \mathbf{X}%
_{T_{m-1}},X_{i_m}\right) \mbox{ and } \frac{P\left( \mathbf{X}_{T_{m-1}}\right) 
P\left( \mathbf{X}_{S_j}X_{i_m}\right) }{P\left( \mathbf{X}_{S_j}\right) }
\]
is 0:
\[
KL=I\left( \mathbf{X}_{T_{m-1}},X_{i_m}\right) -\left( I\left( \mathbf{X}%
_{T_{m-1}}\right) +I\left( \mathbf{X}_{S_j}X_{i_m}\right) -I\left( \mathbf{X}%
_{S_j}\right) \right) =0.
\]
Thus
\begin{equation}
\label{equ:equ5}
I\left( \mathbf{X}_{S_j}X_{i_m}\right) -I\left( \mathbf{X}%
_{S_j}\right) =I\left( \mathbf{X}_{T_{m-1}},X_{i_m}\right) -I\left( \mathbf{X}%
_{T_{m-1}}\right).
\end{equation}
On the other hand if $S_i$ does not separate $i_m$ from the existing tree
then the KL between
\[
P\left( \mathbf{X}_{T_{m-1}},X_{i_m}\right) \mbox{ and } \frac{P\left( \mathbf{X}%
_{T_{m-1}}\right) P\left( \mathbf{X}_{S_i}X_{i_m}\right) }{P\left( \mathbf{%
X}_{S_i}\right) }
\] 
is positive:
\[
KL=I\left( \mathbf{X}_{T_{m-1}},X_{i_m}\right) -\left( I\left( \mathbf{X}%
_{T_{m-1}}\right) +I\left( \mathbf{X}_{S_i},X_{i_m}\right) -I\left( \mathbf{X}%
_{S_i}\right) \right) >0.
\]
Thus
\begin{equation}
\label{equ:equ6}
I\left( \mathbf{X}_{S_i}X_{i_m}\right) -I\left( \mathbf{X}%
_{S_i}\right) <I\left( \mathbf{X}_{T_{m-1}},X_{i_m}\right) -I\left( \mathbf{X}%
_{T_{m-1}}\right).
\end{equation}

From (\ref{equ:equ5}) and (\ref{equ:equ6}) we have $I\left( \mathbf{X}%
_{S_i}X_{i_m}\right) -I\left( \mathbf{X}_{S_i}\right) <I\left( \mathbf{X}%
_{S_j}X_{i_m}\right) -I\left( \mathbf{X}_{S_j}\right) $.

According to Remark \ref{rem:rem1} this implies 
\[
H\left( X_{i_m}\right) -H\left( X_{i_m}|%
\mathbf{X}_{S_i}\right) <H\left( X_{i_m}\right) -H\left( X_{i_m}|\mathbf{X}%
_{S_j}\right) 
\]
which is in contradiction with maximization of (\ref{equ:equ4}).

\item[2.] If on the path between the existing $T_{m-1}$ tree and the cluster which
contains $i_m$there exists a cluster $\left( S_i,i_n\right) $ , where $%
i_n\in V\backslash V_{m-1}$,and$S_i\in \mathcal{S}_{m-1}$, then according to
the puzzle numbering $i_n<i_m$ . Using and (ii) we have:

\[
H\left( X_{i_m}\right) -H\left( X_{i_m}|S\right) <H\left( X_{i_n}\right)
-H\left( X_{i_n}|S_{i_n}\right) 
\]

for any $S\in \mathcal{S}_{m-1}$, and $S_{i_n}$ $\in \mathcal{S}_{m-1}$
separator between $i_{n}$and the existing tree $T_{m-1}$ $.$ This is
in contradiction with maximization of (\ref{equ:equ4}).
\end{itemize}
\end{proof}

\begin{theorem}
\label{theo:5} If the the best aproximating $k$-th order t-cherry
probability distribution has a puzzle numbering which starting from the
parent cluster defined by (\ref{equ:equ3}) satisfies (i) and (ii) then the Sz\'{a}ntai-Kov%
\'{a}cs Algorithm finds the best aproximating $k$-th order $t$-cherry
probability distribution.

\begin{itemize}
\item[i)] for all $i_r<i_s$, for any $S\in \mathcal{S}_r$, $H\left( X_{i_s}\right) -H\left(
X_{i_s}|S\right) <H\left( X_{i_r}\right) -H\left( X_{i_r}|S_{i_r}\right) $,
where $S_{i_r}\in \mathcal{S}_r$ is the separator which separates $i_r$ from the tree
containing the verticies $\left\{ i_1,\ldots ,i_{r-1}\right\} $

\item[ii)] for all $i_r>k$
\[
X_{i_r}=\arg \min_{i\in V\backslash \left\{ i_1,\ldots ,i_{r-1}\right\}
}KL\left( P_{app}\left( X_{i_1},\ldots ,X_{i_{r-1}},X_i\right) ,P\left(
X_{i_1},\ldots ,X_{i_{r-1}},X_i\right) \right) 
\]
\end{itemize}
\end{theorem}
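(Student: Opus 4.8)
The plan is to mirror the proof of Theorem~\ref{theo:4}, now letting the best approximating $k$-th order $t$-cherry distribution $P_{app}$ play the role held there by the exact $t$-cherry distribution, and replacing the exact global Markov factorization (unavailable here, since the true $P$ need not be a $t$-cherry pd) by the stepwise characterization in assumption~(ii). First I would record the algebraic bridge: by Theorem~\ref{theo:2} minimizing the Kullback--Leibler divergence is the same as maximizing the weight $\sum_{C}I(\mathbf{X}_C)-\sum_{S}(\nu_S-1)I(\mathbf{X}_S)$, and for a $t$-cherry tree this weight is additive over hypercherries, the hypercherry that attaches $X_i$ through a separator $S$ contributing $I(\mathbf{X}_{S\cup\{i\}})-I(\mathbf{X}_S)=H(X_i)-H(X_i\mid S)$. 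Applying this to the marginal on $\{i_1,\dots,i_{r-1},i\}$, and using that the subtree on $\{i_1,\dots,i_{r-1}\}$ is already fixed, shows that the KL-minimization in~(ii) is exactly the stepwise maximization of the increment $H(X_{i_r})-H(X_{i_r}\mid S_{i_r})$ --- precisely the criterion~(\ref{equ:equ4}) used by the Sz\'antai--Kov\'acs algorithm.

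With this bridge in hand I would induct on the number $m$ of vertices already placed. For the base case, relation~(\ref{equ:equ3}) together with the hypothesis that the puzzle numbering starts from $K^{*}$ makes the first cluster chosen by the algorithm coincide with the parent cluster of the best tree. For the inductive step I would assume the algorithm has reproduced the first $m-1$ vertices of the puzzle numbering, so that its stock of separators is $\mathcal{S}_{m-1}$, and argue by contradiction along the two cases of Theorem~\ref{theo:4}. In the ``wrong vertex'' case, where the greedy step would attach some later vertex $i_s$ ($s>m$) through a separator $S\in\mathcal{S}_{m-1}$, assumption~(i) gives $H(X_{i_s})-H(X_{i_s}\mid S)<H(X_{i_m})-H(X_{i_m}\mid S_{i_m})$, so the illegitimate early attachment has strictly smaller weight than the legitimate attachment of $i_m$, contradicting maximality of~(\ref{equ:equ4}); this is verbatim the Case~2 argument of Theorem~\ref{theo:4}.

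The remaining case --- the correct vertex $i_m$ attached through a wrong separator $S'$, or separated from $T_{m-1}$ by a different $S_j$ --- is where the two theorems genuinely diverge and is the step I expect to be the main obstacle. In Theorem~\ref{theo:4} it was dispatched by the exact factorization $P(\mathbf{X}_{T_{m-1}},X_{i_m})=P(\mathbf{X}_{T_{m-1}})P(\mathbf{X}_{S_j}X_{i_m})/P(\mathbf{X}_{S_j})$ coming from the global Markov property, an identity that fails for the non-tree $P$ of the present theorem. Instead I would lean on the bridge of the first paragraph to read assumption~(ii) as asserting that, among all admissible attachments of $i_m$, the best-tree separator $S_{i_m}$ maximizes $H(X_{i_m})-H(X_{i_m}\mid\,\cdot\,)$, so any competing $S'$ yields no larger weight and the greedy selection again matches the best tree. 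The genuinely delicate point --- the crux of the whole proof --- is that the marginal $P_{app}(X_{i_1},\dots,X_{i_{r-1}},X_i)$ in~(ii) silently attaches each candidate vertex through its separator \emph{in the global best tree}, which need not yet lie in $\mathcal{S}_{m-1}$; reconciling that global connection with the separators actually available at step $m$ is exactly what assumption~(i) controls, and I would close the induction by checking that under~(i) and~(ii) together the locally optimal admissible attachment coincides with the globally prescribed one at every step. Induction then shows the algorithm reconstructs the whole best approximating $t$-cherry tree, and therefore returns the best approximating $k$-th order $t$-cherry pd.
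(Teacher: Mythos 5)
Your proposal is correct and follows essentially the same route as the paper's own proof: the same induction driven by the greedy criterion (\ref{equ:equ8}), the same two-case contradiction, with the intermediate-vertex case disposed of by the puzzle numbering and assumption (i) exactly as in Theorem \ref{theo:4}, and the wrong-separator case disposed of by comparing Kullback--Leibler increments against assumption (ii). The ``crux'' you flag at the end is resolved in the paper just as you predict: in the wrong-separator case only separators $S_j\in\mathcal{S}_{m-1}$ ever arise (any attachment through a not-yet-available separator is funneled into the intermediate-vertex case and killed by (i)), and the paper's explicit computation of $KL\left(P_{app}^j,P\right)$ via the factorization of the \emph{approximating} tree pd is precisely the weight--KL bridge of your first paragraph.
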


\begin{proof}
Let the cluster $K^{*}$ which satisfies (\ref{equ:equ3}) the first cluster of the
junction tree. We choose this cluster as parent edge.

Let us suppose that the Sz\'{a}ntai-Kov\'{a}cs Algorithm, has in the
constructed junction tree already $m-1$ verticies. The set of possible
separators at this end is $\mathcal{S}_{m-1}$ .

The Sz\'{a}ntai-Kov\'{a}cs algorithm adds a new cluster by maximizing

\begin{equation}
\label{equ:equ7}
I\left( X_{i_m},\mathbf{X}_{S_i}\right) -I\left( \mathbf{X}_{S_i}\right) ,%
\mbox{ where }i_m\in V\backslash V_{m-1}\mbox{ and }S_i\in \mathcal{S}%
_{m-1} 
\end{equation}

According to Remark \ref{rem:rem1} this is equivalent with maximizing

\begin{equation}
\label{equ:equ8}
H\left( X_{i_m}\right) -H\left( X_{i_m}|S_i\right) \mbox{, where }i_m\in
V\backslash V_{m-1}\mbox{ and }S_i\in \mathcal{S}_{m-1}
\end{equation}

We suppose now by contradiction that in the best approximating junction tree $i_m$ is not
connected to the existing junction tree through $S_i$. Since the best approximating junction
tree is a connected hypergraph there exist two possibilities:

\begin{itemize}
\item[1.] $i_m$ is separated from the existing tree $T_{m-1}$ by another separator $%
S_j\in \mathcal{S}_{m-1};$

\item[2.] In the best approximating junction tree there exists $i_n\in V\backslash V_{m-1}$ which is connected with the
existing junction tree by $S_i\in \mathcal{S}_{m-1}$, and the cluster $%
\left( S_i,i_n\right) $ is on the path between the existing tree and the
cluster which contains $i_m$.
\end{itemize}

Now we pove that none of the two possibilities can occur.

\begin{itemize}
\item[1.] If $i_m$ is separated from the existing tree $T_{m-1}$ by another
separator $S_j$ then according to the Markov property we have:

\[
P_{app}^j\left( \mathbf{X}_{T_{m-1}},X_{i_m}\right) =\frac{P\left( \mathbf{X}%
_{T_{m-1}}\right) P\left( \mathbf{X}_{S_j}X_{i_m}\right) }{P\left( \mathbf{%
X}_{S_j}\right) }. 
\]

This implies that the Kullback Leibler between 
\[
P\left( \mathbf{X}%
_{T_{m-1}},X_{i_m}\right)  \mbox{ and } P_{app}^j\left( \mathbf{X}_{T_{m-1}},X_{i_m}\right)
\]
is given by:

\[
\begin{array}{l}
KL\left( P_{app}^j\left( \mathbf{X}_{T_{m-1}},X_{i_m}\right),P\left( \mathbf{X}_{T_{m-1}},X_{i_m}\right)\right)\vspace{3mm} \\
=I\left( \mathbf{X}_{T_{m-1}},X_{i_m}\right)
-\left( I\left( \mathbf{X}_{T_{m-1}}\right) +I\left( \mathbf{X}%
_{S_j}X_{i_m}\right) -I\left( \mathbf{X}_{S_j}\right) \right)
\end{array}
\]

According to (\ref{equ:equ7}) 
\[I\left( \mathbf{X}_{S_j}X_{i_m}\right) -I\left( 
\mathbf{X}_{S_j}\right) <I\left( \mathbf{X}_{S_i}X_{i_m}\right) -I\left( 
\mathbf{X}_{S_i}\right)
\]
and this implies that
\[
\begin{array}{l}
KL\left( P_{app}^i\left( \mathbf{X}_{T_{m-1}},X_{i_m}\right),P\left( \mathbf{X}_{T_{m-1}},X_{i_m}\right)\right)\vspace{3mm} \\
=I\left( \mathbf{X}_{T_{m-1}},X_{i_m}\right)-\left( I\left( \mathbf{X}_{T_{m-1}}\right) +I\left( \mathbf{X}%
_{S_i}X_{i_m}\right) -I\left( \mathbf{X}_{S_i}\right) \right) <KL\left(
P_{app}^j,P\right).
\end{array}
\] 
This is in contradiction with (ii).

\item[2.] If on the path between the existing $T_{m-1}$ tree and the cluster which
contains $i_m$there exists a cluster $\left( S_i,i_n\right) $ , where $%
i_n\in V\backslash V_{m-1}$,and$S_i\in \mathcal{S}_{m-1}$, then according to
the puzzle numbering $i_n<i_m$ and (i) we have:
\[
H\left( X_{i_m}\right) -H\left( X_{i_m}|S\right) <H\left( X_{i_n}\right)
-H\left( X_{i_n}|S_{i_n}\right) 
\]
for any $S\in \mathcal{S}_{m-1}$, and $S_{i_n}$ $\in \mathcal{S}_{m-1}$
separator between $i_{n}$and the existing tree $T_{m-1}$ $.$ This is
in contradiction with maximizing (\ref{equ:equ8}).
\end{itemize}
\end{proof}

\section{Conclusions}
\label{sec:5}

We give in this paper a greedy algorithm for fitting $k$%
-width junction tree approximation by minimizing the Kullback-Leibler
divergence. The problem of finding the best approximation of this kind is
generally an NP-hard problem. We reduce the search space to the so called $k$-th
order $t$-cherry junction tree probability distributions. We then compare
our algorithm to Malvestuto's algorithm. We proved that our algorithm in the first step finds
a cluster which belongs to the junction tree. Malvestuto's
algorithm has not guarantee for this. Beside this our formula for
Kullback-Leibler divergence (\ref{equ:equ1}) detached a greater part which does not depend on
the structure of the tree than Malvestuto's formula (\ref{equ:equ2}).

We proved that under some assumptions our algorithm finds the optimal
solution.

By discovering the $t$-cherry junction tree probability distribution
assigned to a Markov network we can obtain many information on the
dependence structure underlying the random variables. This information can
be used for storing the data in lower dimensional contingency tables. The
method can be applied in classification problems where it is possible to
select the ''informative'' variables which influence directly the
classification variable, see \cite{SzaKov10}.




\end{document}